\newtheorem{thrm}{Theorem}[section]
   \newtheorem{fact}[thrm]{Proposition}
   \newtheorem{lemma}[thrm]{Lemma}
   \newtheorem{col}[thrm]{Corollary}
   \newtheorem{defn}[thrm]{Definition}
   \newtheorem{question}[thrm]{Question}
\newcommand{\AAA}{\mathcal{A}}
\newcommand{\KK}{\mathcal{K}}
\newcommand{\LL}{\mathcal{L}}
\newcommand{\FF}{\mathcal{F}}
\newcommand{\DD}{\mathcal{D}}
\newcommand{\CC}{\mathcal{C}}
\newcommand{\ZZ}{\mathcal{Z}}
\DeclareMathOperator{\mt}{mt}
\DeclareMathOperator{\ur}{sd}
\DeclareMathOperator{\conv}{conv}
\DeclareMathOperator{\MA}{MA}
\newcommand{\dx}{\;\text{d}}
\newcommand{\sub}{\subseteq}
\title[A dichotomy for the spaces of measures]{A dichotomy for the convex spaces\\ of probability measures}
\author[M.\ Krupski]{Miko\l aj Krupski}
\address{Instytut Matematyczny\\ Polska Akademia Nauk\\ Ul.\ \'{S}niadeckich 8\\00-956 Warszawa\\ Poland }
\email{krupski@impan.pl}
\author[G.\ Plebanek]{Grzegorz Plebanek}
\address{Instytut Matematyczny\\ Uniwersytet Wroc\l awski\\ Pl.\ Grunwaldzki 2/4\\
50-384 Wroc\-\l aw\\ Poland} \email{grzes@math.uni.wroc.pl}
\date{\today}
\subjclass[2000]{Primary 46E27, 28C15; Secondary 46B26, 46E15.}
\thanks{G.\ Plebanek was partially supported by MNiSW Grant N N201 418939 (2010--2013).}
\begin{document}
\begin{abstract}
We show that every nonempty compact and convex space $M$ of probability Radon measures either contains
a measure which has  `small' local character in $M$ or else $M$ contains a measure of `large' Maharam type.
Such a dichotomy is related to several results on Radon measures on compact spaces and to some properties of Banach spaces of continuous functions.
\end{abstract}

\maketitle

\section{Introduction}

Throughout this note $K$ denotes a compact Hausdorff space. By a {\em Radon measure} $\mu$ on $K$ we
mean a finite measure defined on the Borel $\sigma$-algebra $Bor(K)$ of $K$ which is inner regular, that is
$$\mu(B)=\sup\{\mu(F):F=\overline{F}\subseteq B\},$$
for any $B\in Bor(K)$. We denote by $P(K)$ the space of all probability Radon measures on $K$. The space
$P(K)$ is always equipped with the $weak^*$ topology inherited from $P(K)\sub C(K)^*$, where
$C(K)$ is the  Banach space of continuous functions on $K$. When we treat a given measure $\mu\in P(K)$
as a functional on $C(K)$ we write $\mu(g)$ rather than $\int_K g\dx\mu$.

We say that a measure $\mu\in P(K)$ is of \textit{Maharam type} $\kappa$ and write  $\mt(\mu)=\kappa$
if $L_1(\mu)$ has density character $\kappa$.
Except for some trivial cases, $\mt(\mu)$ can be defined as the minimal cardinality of a family $\DD\sub Bor(K)$
such that for any $\varepsilon>0$ and any $B\in Bor(K)$ there is $D\in\DD$ such that
$\mu(D\bigtriangleup B)<\varepsilon$.

We consider here nonempty compact and convex subsets $M\sub P(K)$.
The dichotomy announced by the title of this note states, in particular, that
every such a space $M$ either has a $G_\delta$ point or contains a measure $\mu$ of uncountable
Maharam type. Roughly speaking, this means that to build a complicated
topological space like $M$ one needs large measures.

Subsequent sections illustrate how our dichotomy works. The condition that $\mu\in P(K)$ is
a $G_\delta$ point in $P(K)$ is equivalent to some measure-theoretic property of $\mu$, namely to
$\mu$ being strongly countably determined. It follows that every compact
space $K$ either carries a strongly countably determined measure (topologically simple, behaving
like measures on metric spaces) or a Radon measure of  uncountable type.

For a given compact space $K$, the existence of  Radon measures on $K$ of uncountable Maharam types is connected with the existence of continuous surjections from $K$ or $P(K)$ onto Tichonov cubes. This enables us to derive from our main theorem some purely topological results on compact spaces of measures. 

Finally, we give some applications of our dichotomy to Grothendieck-like properties
of Banach spaces of the form $C(K)$.

Our main theorem arose as a generalization of a result due to Borodulin-Nadzieja
\cite[Theorem 4.3]{PBN} and was further inspired by a theorem due to Haydon, Levy and Odell
  \cite[Corollary 3C]{HLO}). Although our dichotomy is a generalization of
 the latter, its proof seems to be more  straightforward.
\medskip

\textbf{Acknowledgment.} This paper was partially inspired by a series of lectures given by Piotr Koszmider at the Institute of Mathematics of the University of Wroc\l aw in November-December 2010; notes from those lectures can be found in \cite{Ko}.

\section{Dichotomy}

If $X$ is a topological space and $x\in X$ then $\chi(x,X)$ denotes the local character of $X$ at $x$,
i.e.\ the minimal cardinality of a local base at $x$. Recall that in a compact space
$X$ the local character at $x\in X$ agrees with the minimal cardinality of a family of open sets
intersecting to $\{x\}$.

In this section we shall present the main result of this paper stated as Theorem \ref{main} below.
We first recall the following useful fact due to Douglas (see \cite{D}, Theorem~1), which
is reproduced here together with its short proof.

\begin{lemma}\label{lemma}
Let $K$ be a  compact space and let $\FF\subseteq C(K)$ be any family of functions.
Suppose that measures $\mu_1,\mu_2\in P(K)$ satisfy $\mu_1\neq\mu_2$ and $\mu_1(f)=\mu_2(f)$
for every $f\in\FF$.

If $\mu=\frac{\mu_1+\mu_2}{2}$ then $\FF$ is not dense in $L_1(\mu)$.
\end{lemma}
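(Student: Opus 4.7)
The plan is to exhibit a nonzero bounded linear functional on $L_1(\mu)$ that vanishes on $\FF$, which by standard duality (Hahn--Banach / the identification $L_1(\mu)^*=L_\infty(\mu)$) suffices to show non-density.

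First I would use that $\mu_1,\mu_2\le 2\mu$, so both $\mu_1$ and $\mu_2$ are absolutely continuous with respect to $\mu$. By the Radon--Nikodym theorem there exist densities $h_1,h_2\in L_1(\mu)$ with $\mu_i=h_i\,\mu$, and since $\mu_i\le 2\mu$ one in fact has $0\le h_i\le 2$ almost everywhere, so $h_1,h_2\in L_\infty(\mu)$. The combination $g:=h_1-h_2$ lies in $L_\infty(\mu)$; moreover $g$ is not $\mu$-almost-everywhere zero, since otherwise $\mu_1=\mu_2$ contradicting the hypothesis.

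Next I would check that $g$, viewed as a continuous linear functional on $L_1(\mu)$ via $\varphi\mapsto\int_K \varphi g\dx\mu$, annihilates $\FF$. For $f\in\FF$,
\[
\int_K f\,g\dx\mu=\int_K f\,h_1\dx\mu-\int_K f\,h_2\dx\mu=\mu_1(f)-\mu_2(f)=0
\]
by assumption. Since $g$ represents a nonzero element of $L_1(\mu)^*$ that vanishes on $\FF$, the linear span of $\FF$ cannot be dense in $L_1(\mu)$, completing the proof.

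There is no genuine obstacle: the argument is a direct Radon--Nikodym plus duality computation. The only mild point to be careful about is verifying that $g$ really is nonzero in $L_\infty(\mu)$ (equivalently, that $h_1\ne h_2$ in $L_1(\mu)$), which follows immediately from $\mu_1\ne\mu_2$.
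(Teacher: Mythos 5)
Your proof is correct and is essentially the paper's own argument: the paper likewise applies Radon--Nikodym to write $\mathrm{d}\mu_1=h\,\mathrm{d}\mu$ and uses the nonzero functional $\varphi\mapsto\int\varphi(1-h)\dx\mu$, which (since $h_1+h_2=2$ $\mu$-a.e.) is just $-\tfrac12$ times your $\varphi\mapsto\int\varphi\,g\dx\mu$. No substantive difference.
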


\begin{proof}
We have $\mu=\frac{\mu_1+\mu_2}{2}$ so $\mu_1$ is absolutely continuous with respect to $\mu$, in fact
$\mu_1\le 2\cdot \mu$.  By the Radon-Nikodym theorem there exists $h\in L_\infty(\mu)$ such that
$\text{d}\mu_1=h\; \text{d}\mu$. Note that $1-h\neq 0$ since $\mu_1\neq\mu_2$.

For any function $f\in \FF$ we have
$$\int f(1-h)\dx\mu=\int f \dx\mu-\int f h \dx\mu=\int f \dx\mu-\int f \dx\mu_1=0,$$
which  proves that $\FF$ lies in the kernel of a nonzero continuous functional on $L_1(\mu)$ so is not dense in $L_1(\mu)$.
\end{proof}

\begin{thrm}\label{main}
Let $K$ be a compact space and  $\kappa$ be any cardinal number of uncountable cofinality.
If  $M$ is a  nonempty compact and convex subspace of $P(K)$ then either
\begin{itemize}
 \item[(D1)] there exists $\mu\in M$, such that $\chi(\mu,M)<\kappa$, or
 \item[(D2)] there exists $\mu\in M$, such that $\mt(\mu)\geqslant\kappa$.
\end{itemize}
\end{thrm}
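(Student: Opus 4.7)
My plan is to prove the contrapositive: assume (D2) fails, so $\mt(\mu) < \kappa$ for every $\mu \in M$, and produce $\mu \in M$ with $\chi(\mu, M) < \kappa$.

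The guiding observation is that if $\mu \in M$ and $\FF \sub C(K)$ of cardinality $< \kappa$ is dense in $L_1(\mu)$ and satisfies
\[
N(\mu, \FF) := \{\nu \in M : \nu(f) = \mu(f) \text{ for all } f \in \FF\} = \{\mu\},
\]
then $\chi(\mu, M) < \kappa$. Indeed, the basic weak$^*$-open neighborhoods of $\mu$ in $M$ built from finite subsets of $\FF$ and rational tolerances form a family of cardinality at most $|\FF|\cdot\omega < \kappa$, and by compactness of $M$ together with the uniqueness property, this family is a local base at $\mu$ in $M$.

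To locate such a pair I would build, by transfinite recursion, $(\mu_\alpha, \FF_\alpha)$ for $\alpha < \mathrm{cof}(\kappa)$ with $\mu_\alpha \in M$, $\FF_\alpha \sub C(K)$ increasing, $|\FF_\alpha| < \kappa$, $\FF_\alpha$ dense in $L_1(\mu_\alpha)$, and $\mu_\alpha(f) = \mu_\beta(f)$ for $f \in \FF_\beta$, $\beta < \alpha$. At a successor stage $\alpha+1$: either $N(\mu_\alpha, \FF_\alpha) = \{\mu_\alpha\}$ (the recursion halts with $\mu := \mu_\alpha$), or I pick $\mu'_\alpha \in N(\mu_\alpha, \FF_\alpha) \setminus \{\mu_\alpha\}$, set $\mu_{\alpha+1} = \tfrac12(\mu_\alpha + \mu'_\alpha) \in M$, invoke Lemma~\ref{lemma} to conclude that $\FF_\alpha$ is not dense in $L_1(\mu_{\alpha+1})$, and exploit $\mt(\mu_{\alpha+1}) < \kappa$ to extend $\FF_\alpha$ to $\FF_{\alpha+1}$ of cardinality $< \kappa$ that is dense in $L_1(\mu_{\alpha+1})$. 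At a limit stage $\lambda < \mathrm{cof}(\kappa)$, the uncountable cofinality of $\kappa$ ensures $\bigl|\bigcup_{\alpha<\lambda}\FF_\alpha\bigr| < \kappa$; I pick $\mu_\lambda$ in the nonempty compact convex set $\bigcap_{\alpha<\lambda} N(\mu_\alpha, \FF_\alpha)$ (nonempty by compactness of $M$) and, if needed, augment the union to a family of size $<\kappa$ dense in $L_1(\mu_\lambda)$.

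The main obstacle is to show that the recursion must halt at some successor stage $\alpha < \mathrm{cof}(\kappa)$ with the uniqueness property. A useful monotonicity comes from a second application of Lemma~\ref{lemma}: since $\FF_{\alpha+1}$ is dense in $L_1(\mu_{\alpha+1})$ and $\mu_\alpha \neq \mu'_\alpha$, these two measures must disagree on some $f \in \FF_{\alpha+1}$, so $\mu'_\alpha \notin N(\mu_{\alpha+1}, \FF_{\alpha+1})$, and the compact convex fibers $N(\mu_\alpha, \FF_\alpha)$ strictly decrease along the recursion. The delicate final step is to convert a hypothetical non-terminating recursion into a limit measure in $M$ whose $L_1$-density is forced up to $\kappa$, contradicting the failure of (D2). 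Making this last contradiction precise---controlling the Maharam type of the limit measure using the coherence of the $\mu_\alpha$'s on the $\FF_\beta$'s and the strict shrinking of the fibers---is where I expect the subtlest work to lie.
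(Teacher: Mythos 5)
Your framework is the paper's proof read contrapositively: you track fibers $N(\mu_\alpha,\FF_\alpha)$ cut out by fewer than $\kappa$ continuous functions, observe that a singleton fiber yields $\chi(\mu_\alpha,M)<\kappa$ (correct, since in a compact space the character of a point equals the least number of open sets intersecting to it), and otherwise average two distinct members of the fiber and invoke Lemma~\ref{lemma}. The step you defer, however --- converting a non-terminating recursion into a measure of Maharam type $\geqslant\kappa$ --- is not a technical afterthought; it is the entire content of the theorem, and as you have set it up it does not go through. The length of your recursion is $\mathrm{cof}(\kappa)$, so even if at every successor stage you extract a continuous $f_\alpha$ at positive $L_1(\mu_{\alpha+1})$-distance from everything built so far, and even if these separations survive to a limit measure $\mu_*$, you obtain at most $\mathrm{cof}(\kappa)$ functions and hence only $\mt(\mu_*)\geqslant\mathrm{cof}(\kappa)$. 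For a singular $\kappa$ of uncountable cofinality (say $\kappa=\aleph_{\omega_1}$) this is perfectly compatible with the failure of (D2), so no contradiction results. The paper avoids this by running the recursion for $\kappa$ steps, which it can afford because it adds only one function (plus pairwise absolute differences) per step, keeping $|\FF_\alpha|=|\alpha|\cdot\omega<\kappa$ throughout; your families must contain an $L_1(\mu_\alpha)$-dense set whose size is bounded only by $\mt(\mu_\alpha)<\kappa$, so if you lengthen the recursion to $\kappa$ steps, at a limit stage $\lambda\geqslant\mathrm{cof}(\kappa)$ the union $\bigcup_{\alpha<\lambda}\FF_\alpha$ may already have size $\kappa$ and the bookkeeping collapses. (The dense families are in fact dispensable: $N(\mu,\FF)=\{\mu\}$ alone gives small character, and the strict shrinking of the fibers, which is what you use density for, plays no role in the endgame.)

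A second, independent gap: even for regular $\kappa$, agreement of later measures with $\mu_{\alpha+1}$ on a set $\FF_{\alpha+1}$ dense in $L_1(\mu_{\alpha+1})$ does not transfer to agreement on the $L_1(\mu_{\alpha+1})$-closure, since $\nu(f_n)\to\nu(g)$ for $f_n\to g$ in $L_1(\mu_{\alpha+1})$ would require control of $\nu$ against $\mu_{\alpha+1}$ that you do not have. To guarantee that the separation $\int|f_\alpha-f_\xi|\dx\mu_{\alpha+1}\geqslant\varepsilon_\alpha$ persists in the final measure, you must explicitly place the functions $|f_\alpha-f_\xi|$ into the coherence families --- exactly what the paper's sets $\FF_\alpha$ do --- and then use the uncountable cofinality of $\kappa$ once more to pass to a $\kappa$-sized subfamily with a uniform separation constant. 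None of this appears in your write-up; the ``delicate final step'' you flag is precisely where the theorem gets proved.
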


\begin{proof}
Assuming  that for any $\mu\in M$ we have that $\chi(\mu,M)\geqslant\kappa$ we shall prove that
there is  a measure $\mu\in M$ with $\mt(\mu)\geqslant\kappa$.

We construct inductively a family $\{\mu_\alpha\in M:\alpha\le\kappa\}$
of measures from $M$ and a family $\{f_\alpha:\alpha< \kappa\}$ of functions from $C(K)$ such that, writing
for any $\alpha<\kappa$,
\[\FF_\alpha=
\{f_\xi:\xi\le\alpha\}\cup \{\lvert f_{\xi}-f_{\zeta}\rvert: \zeta<\xi\le \alpha\},\]
the following are satisfied

\begin{itemize}
\item[(i)] $\mu_\beta(g)=\mu_\alpha(g)$ whenever $\alpha<\beta\le\kappa$ and  $g\in \FF_\alpha$;\\[1ex]
\item[(ii)] for every $\beta<\kappa$
$$\varepsilon_\beta=\inf_{\alpha<\beta}\int\lvert f_\alpha-f_\beta\rvert \dx\mu_\beta>0.$$
\end{itemize}

Let us note first that once the construction is done,
the measure $\mu=\mu_\kappa$ will have the desired property. Indeed,
as $\kappa$ has uncountable cofinality, there is $\varepsilon>0$ such that $\varepsilon_\beta\ge\varepsilon$ for $\beta$ from some set $Y\subseteq\kappa$ of size $\kappa$.
Using (i),  for any $\beta,\beta'\in Y$, $\beta'<\beta<\kappa$ we have
\[\int\lvert f_{\beta}-f_{\beta'}\rvert \dx\mu_\kappa=\int\lvert f_{\beta}-f_{\beta'}\rvert \dx\mu_{\beta}>\varepsilon_\beta\ge \varepsilon,\]
so the family $\{f_\beta: \beta\in Y\}$ witnesses that $\mt(\mu_\kappa)\geqslant\kappa$.

We shall now describe the inductive step:
let us assume that we have already constructed measures $\{\mu_\alpha\in M:\alpha<\beta\}$ and continuous functions $\{f_\alpha:\alpha<\beta\}$ so that conditions (i)-(ii) are satisfied.

Let us denote $\FF^*=\bigcup_{\alpha<\beta}\FF_\alpha$ and for every $\alpha<\beta$ write
\[M_{\alpha}=\{\nu\in M:\; \nu(f)=  \mu_\alpha(f) \mbox{ for all } f\in \FF_\alpha\}.\]
It follows from  (i) that $\{M_\alpha: \alpha<\beta\}$  is a decreasing family of nonempty and closed sets in $M$ so, by compactness, 
$M_\beta=\bigcap_{\alpha<\beta}M_\alpha$ is nonempty, too.

For fixed $f\in \FF^*$ and  $\alpha<\beta$, the set
$\{\nu\in M: \nu(f)=\mu_\alpha(f)\}$ is a $G_\delta$ subset of $M$.
It follows that $M_{\beta}$ is an intersection of less than  $\kappa$ open sets in $M$
(since $|\FF^*|<\kappa$), so $M_{\beta}$ cannot consist of  a single point (by our assumption that (D1) does not hold).

Take $\nu',\nu''\in M_{\beta}$, $\nu'\neq\nu''$
and put $\mu_{\beta}=\frac{\nu'+\nu''}{2}$.
By Lemma \ref{lemma} $\FF^*$ is not dense in $L_1(\mu_{\beta})$ so there is a continuous function $f_{\beta}\in C(K)$ such that
\[\inf_{\xi<\beta}\int\lvert f_\xi-f_{\beta}\rvert \dx\mu_{\beta}>0,\]
and we are done.
\end{proof}

Let us note that the alternatives of Theorem \ref{main}  have rather different content:
(D1) is a topological statement on the space $M$ while (D2) names a  measure-theoretic property of elements of $M$. We shall see below, however, that in fact (D1) is equivalent to another purely measure-theoretic property. Likewise, we
shall mention instances when (D2) has natural topological consequences.

\section{Strongly determined measures}

A measure $\mu\in P(K)$ is said to be \textit{strongly countably determined} if there exists a continuous map
$f:K\rightarrow [0,1]^\omega$, such that for any compact set $F\subseteq K$ we have
$\mu(F)=\mu(f^{-1}f[F])$.
Strongly countably determined  measures were introduced by Babiker \cite{B}
who gave them the name {\em uniformly regular measures},
motivated by considerations related to uniform spaces. A measure $\mu$ is strongly countably determined
if and only if there is a countable family $\ZZ$ of closed $G_\delta$ subsets of $K$ such that
\[(*)\quad \mu(U)=\sup\{\mu(Z): Z\in\ZZ, Z\subseteq U\},\]
for every open set $U\subseteq K$, see \cite{B}. The latter condition seems to justify the name
strongly countably determined; note that
if we relax the property to saying that $\ZZ$ is a countable family of closed sets then a measure
$\mu$ satisfying (*)  is called {\em countably determined}.
(Strongly) countably determined measures we considered by
Pol \cite{P}, Mercourakis \cite{Me96}, Plebanek \cite{Pl00} and Marciszewski \& Plebanek \cite{MP11}.

We shall consider the following more general notion  (see \cite{K}).

\begin{defn}\label{def}
Let us say that a measure $\mu\in P(K)$ is strongly $\kappa$-determined if
there exists continuous map
$f:K\rightarrow [0,1]^\kappa$ such that for any compact set $F\subseteq K$
we have $\mu(F)=\mu(f^{-1}f[F])$.
 We shall write $\ur(\mu)=\kappa$ to denote the least cardinal $\kappa$ for which
$\mu$ is strongly $\kappa$-determined.
\end{defn}

It can be checked  that, as in the case $\kappa=\omega$,
$\ur(\mu)$ is the minimal cardinality of an infinite family $\ZZ$ of closed $G_\delta$ sets
such that for any open set $U\subseteq K$ and any $\varepsilon>0$, there is $Z\in\ZZ$, $Z\subseteq U$ and
$\mu(U\setminus Z)<\varepsilon$.

Clearly $\mt(\mu)\le\ur(\mu)$, since the family approximating all open sets as in (*)
needs to $\bigtriangleup$-approximate all Borel sets by regularity of a measure.
Note that the Dirac measure $\mu=\delta_x$, where $x\in K$, is strongly countably determined if and only if
$x$ is a $G_\delta$ point in  $K$; in fact we always have $\ur(\delta_x)=\chi(x,K)$.

In \cite{P} Pol proved that if a measure $\mu\in P(K)$ is strongly countably determined then $\chi(\mu,P(K))=\omega$. A straightforward modification of
the argument used in \cite[Proposition 2]{P} yields the following.

\begin{fact}\label{fact:Pol}
For any compact space $K$ and a measure $\mu\in P(K)$ we have $\chi(\mu,P(K))\le \ur(\mu)$.
\end{fact}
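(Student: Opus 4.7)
The plan is to adapt the argument of Pol \cite[Proposition 2]{P} from $\omega$ to arbitrary cardinals. Setting $\kappa=\ur(\mu)$, I would fix a family $\ZZ=\{Z_\alpha:\alpha<\kappa\}$ of closed $G_\delta$ subsets of $K$ witnessing strong $\kappa$-determinedness, so that for every open $U\subseteq K$ and every $\varepsilon>0$ some $Z_\alpha\in\ZZ$ satisfies $Z_\alpha\subseteq U$ and $\mu(U\setminus Z_\alpha)<\varepsilon$. Using this family I would explicitly construct a local base at $\mu$ of cardinality at most $\kappa$.

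The key step, and the expected main obstacle, is a uniqueness assertion: any $\nu\in P(K)$ satisfying $\nu(Z_\alpha)=\mu(Z_\alpha)$ for every $\alpha<\kappa$ must coincide with $\mu$. To prove this I would invoke the defining property of $\ZZ$: for any open $U\subseteq K$,
\[
\mu(U)=\sup\{\mu(Z_\alpha):Z_\alpha\in\ZZ,\ Z_\alpha\subseteq U\}=\sup\{\nu(Z_\alpha):Z_\alpha\in\ZZ,\ Z_\alpha\subseteq U\}\le\nu(U),
\]
so $\mu\le\nu$ on all open sets. A standard layer-cake integration then yields $\mu(g)\le\nu(g)$ for every nonnegative $g\in C(K)$; applying this to $\|g\|_\infty-g$ produces the reverse inequality, and the Riesz representation theorem forces $\mu=\nu$.

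To build the base, I would use normality of $K$ to realize each $Z_\alpha$ as a zero set, $Z_\alpha=f_\alpha^{-1}(\{0\})$ with $f_\alpha\in C(K,[0,1])$, and set $\phi_{\alpha,n}=\max(0,1-n f_\alpha)\in C(K)$. Then $\phi_{\alpha,n}\downarrow\chi_{Z_\alpha}$ pointwise, so by monotone convergence $\nu(Z_\alpha)=\inf_n \nu(\phi_{\alpha,n})$ for every $\nu\in P(K)$. The proposed local base at $\mu$ consists of the open neighbourhoods
\[
V(F,m)=\{\nu\in P(K):\,|\nu(\phi_{\alpha,n})-\mu(\phi_{\alpha,n})|<1/m\text{ for all }(\alpha,n)\in F\},
\]
indexed by $F\in[\kappa\times\omega]^{<\omega}$ and $m\in\omega$; this collection has cardinality at most $\kappa$.

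To verify that $\{V(F,m)\}$ really is a local base, I would argue by compactness. If a basic open set $W\ni\mu$ contained no $V(F,m)$, then the nonempty closed sets $\overline{V(F,m)}\setminus W$ would form a downward-directed family in the compact space $P(K)$ and hence share a common point $\nu$. Taking $m\to\infty$ inside $\overline{V(\{(\alpha,n)\},m)}$ would force $\nu(\phi_{\alpha,n})=\mu(\phi_{\alpha,n})$ for every $\alpha,n$, hence $\nu(Z_\alpha)=\mu(Z_\alpha)$ for every $\alpha$, and the uniqueness step would then give $\nu=\mu$, contradicting $\mu\in W$ and $\nu\notin W$. Thus some $V(F,m)\subseteq W$, completing the construction; everything apart from the uniqueness step is routine bookkeeping.
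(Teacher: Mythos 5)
Your proof is correct and is precisely the ``straightforward modification of Pol's argument'' that the paper invokes without writing out: approximate the indicator of each closed $G_\delta$ set $Z_\alpha$ by a decreasing sequence of continuous functions, take the finite-intersection neighbourhoods they generate, and reduce the local-base verification (via compactness) to the key uniqueness claim that a measure agreeing with $\mu$ on every $Z_\alpha$ must equal $\mu$. The only point worth noting is that you start from the characterization of $\ur(\mu)$ by families of closed $G_\delta$ sets rather than from the official definition via maps into Tichonov cubes, but the paper asserts this equivalence immediately before the statement, so your argument sits squarely within its framework.
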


We shall check  now that in fact the equality $\chi(\mu,P(K))=\ur(\mu)$ always holds.

For the sake of the next lemma we recall some standard fact concerning
finitely additive measures on algebras of sets. Suppose that $\AAA$ is an algebra of subsets
of some space $X$. If $\LL\sub\AAA$ is a lattice (that is $\LL$ is closed under finite unions and
intersecions) then a finitely additive measure $\mu$ on $\AAA$ is said to be $\LL$-regular
if 
\[\mu(A)=\sup\{\mu(L): L\sub A, L\in\LL\},\]
for every $A\in\AAA$. The following result can be found in Bachman \& Sultan \cite{BS}, Theorem 2.1.

\begin{thrm}\label{lemma:extension}
Let $\mathcal{A}\subseteq P(X)$ be an algebra of sets. Let $\LL\subseteq \mathcal{A}$ be a lattice
and let $\mu$ be an $\LL$-regular, finitely additive measure on $\mathcal{A}$. Assume $\KK\supseteq\LL$
is also a lattice. Then $\mu$ extends to a $\KK$-regular, finitely additive measure $\nu$ on $alg(\mathcal{A}\cup\KK)$.
\end{thrm}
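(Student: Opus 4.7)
My plan is to construct $\nu$ by transfinite induction, adding one element of $\KK$ at a time while maintaining $(\KK\cap\mathcal{A}_\alpha)$-regularity as an invariant. Fix a well-ordering $\KK\setminus\mathcal{A}=\{K_\alpha:\alpha<\lambda\}$ and inductively build pairs $(\mathcal{A}_\alpha,\mu_\alpha)$ for $\alpha\leq\lambda$, where $\mathcal{A}_\alpha:=alg(\mathcal{A}\cup\{K_\beta:\beta<\alpha\})$ and $\mu_\alpha$ is a finitely additive extension of $\mu$ to $\mathcal{A}_\alpha$. At a successor step, set
$$\mu_{\alpha+1}(K_\alpha):=\inf\{\mu_\alpha(B):B\in\mathcal{A}_\alpha,\ B\supseteq K_\alpha\}$$
and invoke the standard single-set extension theorem for finitely additive measures. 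The resulting extension automatically satisfies $\mu_{\alpha+1}(B\cap K_\alpha)=\inf\{\mu_\alpha(B'):B'\in\mathcal{A}_\alpha,\ B'\supseteq B\cap K_\alpha\}$ (outer) and $\mu_{\alpha+1}(B\setminus K_\alpha)=\sup\{\mu_\alpha(B''):B''\in\mathcal{A}_\alpha,\ B''\subseteq B\setminus K_\alpha\}$ (inner) for every $B\in\mathcal{A}_\alpha$. At limit stages, take direct limits of algebras and measures; the invariant is automatically preserved since every $C\in\mathcal{A}_\alpha$ already belongs to some $\mathcal{A}_\beta$ with $\beta<\alpha$.

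The crucial invariant is that $\mu_\alpha$ is $(\KK\cap\mathcal{A}_\alpha)$-regular, i.e.,\ $\mu_\alpha(C)=\sup\{\mu_\alpha(K):K\in\KK\cap\mathcal{A}_\alpha,\ K\subseteq C\}$ for every $C\in\mathcal{A}_\alpha$. This holds at $\alpha=0$ because $\LL\subseteq\KK\cap\mathcal{A}$ combined with the $\LL$-regularity of $\mu$ forces the same approximation from the larger family $\KK\cap\mathcal{A}$. For the successor step, decompose any $C\in\mathcal{A}_{\alpha+1}$ as $C=(B_1\cap K_\alpha)\sqcup(B_2\setminus K_\alpha)$ with $B_1,B_2\in\mathcal{A}_\alpha$, and construct a $\KK\cap\mathcal{A}_{\alpha+1}$-approximant $K'\subseteq C$ of the form $K'=(K'_1\cap K_\alpha)\cup K'_2$, where $K'_1\in\KK\cap\mathcal{A}_\alpha$ approximates $B_1$ from within (by the inductive hypothesis) and $K'_2\in\KK\cap\mathcal{A}_\alpha$ approximates a suitable $\mathcal{A}_\alpha$-subset of $B_2$ disjoint from $K_\alpha$. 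The membership $K'_1\cap K_\alpha\in\KK$ follows because $\KK$ is a lattice; the approximation quality for the outer component uses the finite subadditivity of the outer measure applied to the small set $(B_1\setminus K'_1)\cap K_\alpha$.

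The main obstacle is verifying this inductive preservation of $\KK$-regularity at successor steps. The specific choice $\mu_{\alpha+1}(K_\alpha):=\inf\{\mu_\alpha(B):B\supseteq K_\alpha\}$ is essential: the dual choice using the inner measure would force $\mu_{\alpha+1}(B\setminus K_\alpha)$ to equal the outer measure of $B\setminus K_\alpha$, which in general cannot be approximated from within by $\KK$-sets disjoint from $K_\alpha$. At the final stage $\alpha=\lambda$ one has $\mathcal{A}_\lambda=alg(\mathcal{A}\cup\KK)$, and the invariant becomes precisely the desired $\KK$-regularity of $\nu:=\mu_\lambda$.
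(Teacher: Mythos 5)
Your argument is correct, and it is worth noting that the paper itself offers no proof of this statement at all: it is quoted verbatim from Bachman and Sultan \cite{BS} (their Theorem 2.1), so your transfinite construction is a genuinely self-contained alternative rather than a rephrasing of anything in the text. The two decisions that make your induction work are exactly the right ones. First, assigning $K_\alpha$ its \emph{outer} measure is what forces $\mu_{\alpha+1}(B\cap K_\alpha)=\mu_\alpha^*(B\cap K_\alpha)$ and $\mu_{\alpha+1}(B\setminus K_\alpha)=\mu_{\alpha,*}(B\setminus K_\alpha)$ for all $B\in\mathcal{A}_\alpha$ (this is indeed automatic, since $\mu_\alpha^*(B\cap K_\alpha)+\mu_\alpha^*(B^c\cap K_\alpha)=\mu_\alpha^*(K_\alpha)$ for $B\in\mathcal{A}_\alpha$); the inner-measure burden then falls entirely on the complement side, where the inductive hypothesis applies to honest $\mathcal{A}_\alpha$-sets. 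Second, the approximant $(K_1'\cap K_\alpha)\cup K_2'$ is a disjoint union of two members of the lattice $\KK$, so its $\mu_{\alpha+1}$-measure splits, and the only estimate needed is the subadditivity bound $\mu_\alpha^*(B_1\cap K_\alpha)\le\mu_\alpha^*(K_1'\cap K_\alpha)+\mu_\alpha(B_1\setminus K_1')$, which you invoke correctly. The base case uses only $\LL\subseteq\KK\cap\mathcal{A}$, and the limit stages are trivial because enlarging the approximating family can only help. What your route buys is economy of prerequisites: everything reduces to the classical one-element (\L o\'s--Marczewski) extension of a charge, whereas the published argument is organized around induced lattice set functions and a direct verification of additivity of a regularized set function on the generated algebra. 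If you write this up in full, the only points to spell out are the well-definedness and finite additivity of the one-step extension (standard, but worth a reference) and the degenerate case $K_\alpha\in\mathcal{A}_\alpha$, where the step is vacuous.
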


If $f:K\rightarrow L$ is a surjective map between compact spaces and $\mu\in P(K)$ is a measure on $K$, then by $f[\mu]$ we denote
the image measure  $f[\mu]\in P(K)$ which is for $A\in Bor(K)$ defined as  $f[\mu](A)=\mu(f^{-1}[A])$.

\begin{lemma}\label{lemma1}
Let $g:K\rightarrow L$ be a continuous surjection between  compact spaces $K$ and $L$
and let $F\subseteq K$ be a closed set.

Given  $\lambda\in P(L)$, there exist a measure $\mu\in P(K)$ such that $g[\mu]=\lambda$
and $\mu(F)=\lambda(g[F])$.
\end{lemma}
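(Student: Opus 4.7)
The plan is to split $\lambda$ with respect to the closed set $E:=g[F]\subseteq L$ (closed since $F$ is compact), lift each summand $\lambda|_E$ and $\lambda|_{L\setminus E}$ separately, and add the pieces. The first lift will be forced to live on $F$ with total mass $\lambda(E)$; the second will automatically avoid $F$.

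Both lifts use the classical Hahn--Banach construction which shows that whenever $h\colon X\to Y$ is a continuous surjection of compact spaces, every Radon probability measure on $Y$ can be lifted to one on $X$: the pullback $C(Y)\hookrightarrow C(X)$, $\psi\mapsto\psi\circ h$, is an isometric embedding, and a norm-preserving extension to $C(X)$ of the functional induced by $\rho\in P(Y)$ is automatically positive (because its norm equals its value at the constant function $1$) and hence Radon. Applying this (after normalization) to $g|_F\colon F\to E$ and to $\lambda|_E$, I obtain a Radon measure $\nu_1$ on $F$ of total mass $\lambda(E)$ satisfying $(g|_F)[\nu_1]=\lambda|_E$; I then view $\nu_1$ as a measure on $K$ by extending with zero outside $F$. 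Applying the same lifting to $g\colon K\to L$ and $\lambda|_{L\setminus E}$ gives a nonnegative Radon measure $\nu_2$ on $K$ with $\|\nu_2\|=1-\lambda(E)$ and $g[\nu_2]=\lambda|_{L\setminus E}$.

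Finally, I set $\mu:=\nu_1+\nu_2$. Clearly $\mu\in P(K)$ and $g[\mu]=\lambda|_E+\lambda|_{L\setminus E}=\lambda$. The observation that makes the whole construction work is that $\nu_2$ automatically vanishes on $F$: since $F\subseteq g^{-1}(E)$,
\[\nu_2(F)\leq \nu_2\bigl(g^{-1}(E)\bigr)=\lambda|_{L\setminus E}(E)=0.\]
Consequently $\mu(F)=\nu_1(F)+\nu_2(F)=\lambda(E)=\lambda(g[F])$, as required. The degenerate cases $\lambda(E)\in\{0,1\}$ are handled by dropping whichever summand is zero. The only nontrivial ingredient is the generic lifting lemma along continuous surjections, and the rest is just bookkeeping; the key conceptual point is that a lift of $\lambda|_{L\setminus E}$ is automatically disjoint from $g^{-1}(E)$, so only $\nu_1$ contributes to $\mu(F)$.
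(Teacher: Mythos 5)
Your proof is correct, but it takes a genuinely different route from the paper's. The paper works with finitely additive set functions: it transports $\lambda$ to the algebra $\{g^{-1}[B]:B\in Bor(L)\}$, invokes the Bachman--Sultan theorem on lattice-regular extensions to adjoin the single set $F$ while forcing $\mu'(F)$ to equal the outer measure $\inf\{\lambda(B):F\subseteq g^{-1}[B]\}=\lambda(g[F])$, and then extends to a closed-regular (hence Radon) measure on $K$. You instead decompose $\lambda$ at the closed set $E=g[F]$, lift $\lambda|_E$ through $g|_F\colon F\to E$ and $\lambda|_{L\setminus E}$ through $g$ itself using the standard Hahn--Banach/Riesz lifting of measures along continuous surjections, and observe that the second lift gives $F$ measure zero because $F\subseteq g^{-1}[E]$. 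All the steps check out: $g|_F$ does map $F$ onto $E$, the norm-one extension with value $1$ at the constant function is indeed positive, the zero-extension of $\nu_1$ from the closed set $F$ to $K$ is Radon, and the image-measure computations are right. What your argument buys is self-containedness --- only Hahn--Banach and the Riesz representation theorem are needed, with no appeal to the machinery of regular extensions of finitely additive measures; it also makes transparent that your $\mu$ attains the largest possible value $\mu(F)=\mu(g^{-1}[g[F]])$ among all lifts of $\lambda$. What the paper's lattice-extension argument buys is flexibility: it would let one prescribe regularity with respect to a whole lattice of closed sets at once, though for this lemma only the single set $F$ is needed, so nothing is lost by your approach here.
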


\begin{proof}
Let
$$\AAA=\{g^{-1}[B]: B\in Bor(L)\},\quad \LL=\{g^{-1}(H):H=\overline{H}\subseteq L \}.$$
Then $\AAA$ is an algebra of sets, $\LL\sub \AAA$ is a lattice and putting
$\mu_0(g^{-1}[B])=\lambda(B)$ we have an $\LL$-regular measure $\mu_0$ defined on $\AAA$.

Let $\LL'$ be the lattice generated by $\LL\cup\{F\}$ and $\AAA'=alg(\AAA\cup\{F\})$.
Using Theorem \ref{lemma:extension} we can extend $\mu_0$ to a finitely additive measure
$\mu'$ on $\AAA'$ which is $\LL'$-regular.
Note that we then have
\[\mu'(F)=\mu_0^*(F)=\inf\{\lambda(B): F\sub g^{-1}[B]\}=\lambda(g[F]).\]
Finally, $\mu'$ can be extended to a finitely additive closed-regular measure $\mu$ on
an algebra of sets containing $\AAA'$ and all closed subsets of $K$.
Such $\mu$ is then countably additive and extends to a Radon measure by a standard measure
extension theorem, see \cite{BS} for details.
\end{proof}

\begin{thrm}
If  $K$ is a compact space  and $\mu\in P(K)$ then $\chi(\mu,P(K))=\ur(\mu)$.
\end{thrm}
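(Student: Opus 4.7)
The inequality $\chi(\mu,P(K))\le\ur(\mu)$ is already Proposition \ref{fact:Pol}, so the task reduces to proving $\ur(\mu)\le\chi(\mu,P(K))$. Set $\kappa=\chi(\mu,P(K))$. My plan is to reinterpret $\kappa$ as the minimal size of a family of continuous test functions that separates $\mu$ from all other elements of $P(K)$, then package these functions into a single continuous map into $[0,1]^\kappa$, and finally invoke Lemma \ref{lemma1} to verify the defining property of strong $\kappa$-determinacy.

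First I would observe that, since basic weak$^*$ neighborhoods of $\mu$ in $P(K)$ are defined by finite collections of continuous functions, a local base of size $\kappa$ at $\mu$ gives a family $\FF=\{f_\alpha:\alpha<\kappa\}\subseteq C(K)$ of the same cardinality with the property that $\nu\in P(K)$ and $\nu(f_\alpha)=\mu(f_\alpha)$ for every $\alpha<\kappa$ imply $\nu=\mu$; by an affine rescaling I may assume each $f_\alpha$ takes values in $[0,1]$. Define the continuous map
\[f\colon K\to [0,1]^\kappa,\qquad f(x)=(f_\alpha(x))_{\alpha<\kappa},\]
let $L=f[K]$, and set $\lambda=f[\mu]\in P(L)$.

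Now I would fix an arbitrary closed set $F\subseteq K$ and apply Lemma \ref{lemma1} to the surjection $f\colon K\to L$ and the measure $\lambda$: this produces $\mu'\in P(K)$ with $f[\mu']=\lambda$ and
\[\mu'(F)=\lambda(f[F])=\mu(f^{-1}[f[F]]).\]
The equality $f[\mu']=f[\mu]$ implies in particular that $\mu'(f_\alpha)=\mu'(\pi_\alpha\circ f)=\lambda(\pi_\alpha)=\mu(f_\alpha)$ for every $\alpha<\kappa$, where $\pi_\alpha$ denotes the coordinate projection. By the defining property of $\FF$ this forces $\mu'=\mu$, whence
\[\mu(F)=\mu'(F)=\mu(f^{-1}[f[F]]),\]
which is precisely the required identity. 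So $\mu$ is strongly $\kappa$-determined and $\ur(\mu)\le\kappa$.

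The step I expect to require the most care is the first one: checking that a local base of size $\kappa$ yields a separating family $\FF\subseteq C(K)$ of the same size. The argument is routine (replace each basic neighborhood by the finite family of functions defining it, and observe that matching $\mu$ on these functions forces $\nu$ into every neighborhood, hence equal to $\mu$), but it must be spelled out because the whole proof hinges on being able to extract continuous test functions from an abstract local base. Everything after that is essentially a formal consequence of Lemma \ref{lemma1} combined with the observation that the pushforward under $f$ is determined by the values of the $f_\alpha$.
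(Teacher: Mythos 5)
Your proposal is correct and follows essentially the same route as the paper: both reduce to the inequality $\ur(\mu)\le\chi(\mu,P(K))$ via Proposition \ref{fact:Pol}, extract a separating family $\{f_\alpha\}_{\alpha<\kappa}\subseteq C(K)$ from a local base, form the diagonal map into $[0,1]^\kappa$, and apply Lemma \ref{lemma1} to conclude $\mu(F)=\mu(f^{-1}f[F])$ for every closed $F$. The only cosmetic difference is that you argue directly for each closed set $F$ while the paper argues by contradiction from a hypothetical $F$ with $\mu(f^{-1}f[F])>\mu(F)$.
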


\begin{proof}
By Proposition \ref{fact:Pol}, we only need to show $\chi(\mu,P(K))\ge\ur(\mu)$.

Fix $\mu\in P(K)$ and  denote $\chi(\mu,P(K))=\kappa$. We can find a family
$\{f_\xi: \xi<\kappa\}$ of continuous functions $f_\xi:K\to[0,1]$ such that
whenever $\nu\in P(K)$ and $\nu(f_\xi)=\mu(f_\xi)$ for all $\xi<\kappa$ then $\nu=\mu$.

Consider now the diagonal mapping
\[f=\Delta_{\xi<\kappa}:K\rightarrow [0,1]^\kappa,\quad f(x)(\xi)=f_\xi(x).\]
If we  suppose that $\ur(\mu)>\kappa$ then there is a closed set $F\subseteq K$ such that $\mu(f^{-1}f[F])>\mu(F)$. Let $\lambda=f[\mu]\in P([0,1]^\kappa)$;
by Lemma \ref{lemma1} there is  a measure $\nu\in P(K)$, such that $f[\nu]=\lambda$ and $\nu(F)=\lambda(f[F])=\mu(f^{-1}f[F])>\mu(F)$, which in particular means that $\nu\neq\mu$.

On the other hand, $\lambda=f[\mu]=f[\nu]$ so for every $\xi<\kappa$
\[f_\xi[\mu]=\pi_\xi\circ f[\mu]=\pi_\xi\circ f[\nu]=f_\xi[\nu],\]
where $\pi_\xi:[0,1]^\kappa\to [0,1]$ is a projection, and therefore
\[\int f_\xi\dx \mu=\int_0^1 t \dx f_\xi[\mu](t)=\int_0^1 t \dx f_\xi[\nu](t)= \int f_\xi\dx \nu,\]
which is a contradiction.
\end{proof}

Recall that a topological space $K$  is scattered of every subset of $K$ has an isolated point.
Nonscatteredness of $K$ is equivalent to the existence of a continuous surjection $K\to [0,1]$ and
to the existence of a nonatomic measure on $K$.

\begin{col}
Let $\kappa$ be a cardinal number of uncountable cofinality. If $K$ is any compact space
which is not  scattered then
either there a nonatomic measure $\mu\in P(K)$ such that $\ur(\mu)<\kappa$ or
there is a (nonatomic) measure $\mu\in P(K)$ with $\mt(\mu)\ge \kappa$.
\end{col}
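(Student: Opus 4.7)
My plan is to reduce the corollary to Theorem \ref{main} by applying it to a compact convex subspace $M \subseteq P(K)$ that contains only nonatomic measures. Since $K$ is not scattered, there is a continuous surjection $\phi : K \to [0,1]$. Let $\mu_0$ denote Lebesgue measure on $[0,1]$ and set
\[
M = \{\nu \in P(K) : \phi[\nu] = \mu_0\}.
\]
Then $M$ is weak$^*$-closed and convex; it is nonempty by Lemma \ref{lemma1} (applied with $F = \emptyset$); and every $\nu \in M$ is nonatomic, because an atom of $\nu$ at a point $x$ would push forward to an atom of $\mu_0 = \phi[\nu]$ at $\phi(x)$.

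Now apply Theorem \ref{main} to $M$. If alternative (D2) holds we immediately get a nonatomic $\mu \in M$ with $\mt(\mu) \geq \kappa$ and we are done. Otherwise (D1) holds, producing a nonatomic $\mu \in M$ with $\chi(\mu, M) = \lambda$ for some $\lambda < \kappa$; since $\kappa$ is uncountable we may assume $\lambda \geq \omega$. It remains to upgrade $\chi(\mu, M) < \kappa$ to $\ur(\mu) < \kappa$.

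This upgrade is the main obstacle. The theorem just before the corollary gives $\chi(\mu, P(K)) = \ur(\mu)$, but that identity uses the ambient space; local character in the subspace $M$ can be strictly smaller than in $P(K)$, so the equality does not transfer directly. I adapt the earlier argument instead. Choose continuous $\{f_\xi : \xi < \lambda\} \subseteq C(K, [0,1])$ such that any $\nu \in M$ with $\nu(f_\xi) = \mu(f_\xi)$ for every $\xi$ satisfies $\nu = \mu$ (possible since a local base of $\mu$ in $M$ of size $\lambda$ can be refined by subbasic weak$^*$ sets). Define
\[
g : K \to [0,1]^\lambda \times [0,1], \quad g(x) = \bigl((f_\xi(x))_{\xi<\lambda},\, \phi(x)\bigr),
\]
and view the codomain as $[0,1]^\lambda$ via the natural homeomorphism. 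Bundling $\phi$ into $g$ is the key trick: it forces any $\nu$ with $g[\nu] = g[\mu]$ to lie in $M$.

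To close, suppose for contradiction that $\mu$ is not strongly $\lambda$-determined via $g$, i.e.\ there is a closed $F \subseteq K$ with $\mu(g^{-1}g[F]) > \mu(F)$. By Lemma \ref{lemma1} there exists $\nu \in P(K)$ with $g[\nu] = g[\mu]$ and $\nu(F) = g[\mu](g[F]) > \mu(F)$, hence $\nu \neq \mu$. Projecting $g[\nu] = g[\mu]$ onto the last coordinate gives $\phi[\nu] = \mu_0$, so $\nu \in M$; projecting onto the $\xi$-th coordinate gives $f_\xi[\nu] = f_\xi[\mu]$ and hence $\nu(f_\xi) = \mu(f_\xi)$ for every $\xi < \lambda$. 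This contradicts the separating property of $\{f_\xi\}$, so $\ur(\mu) \leq \lambda < \kappa$, as required.
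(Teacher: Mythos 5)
Your proof is correct, and the reduction is the same as the paper's: the set $M=\{\nu\in P(K):\phi[\nu]=\mu_0\}$ of preimages of Lebesgue measure, to which Theorem \ref{main} is applied. Where you diverge is in upgrading the conclusion $\chi(\mu,M)<\kappa$ of alternative (D1) to $\ur(\mu)<\kappa$. The paper's (very terse) route is to observe that $M$ is a $G_\delta$ subset of $P(K)$ --- the condition $\phi[\nu]=\mu_0$ is tested against a countable dense set of functions in $C[0,1]$ --- so that, since character equals pseudocharacter in compact spaces and $\kappa$ is uncountable, $\chi(\mu,P(K))\le\chi(\mu,M)\cdot\omega<\kappa$, and then the identity $\chi(\mu,P(K))=\ur(\mu)$ finishes the job. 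You correctly flag that this identity does not apply verbatim to the relative character in $M$, but rather than restoring the ambient character via the $G_\delta$ observation, you rerun the diagonal-map argument from the proof of that identity with $\phi$ adjoined as an extra coordinate, so that the measure $\nu$ produced by Lemma \ref{lemma1} automatically lands back in $M$ where your separating family $\{f_\xi\}$ operates. This is a legitimate and self-contained alternative; its only cost is redoing an argument the paper already has, while the paper's version explains why the $G_\delta$ property of $M$ is worth recording. One small point to make explicit: Lemma \ref{lemma1} is applied to the surjection $g:K\to g[K]$ rather than to $g:K\to[0,1]^\lambda$, but this is exactly how the paper itself uses the lemma, so no harm done.
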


\begin{proof}
If $K$ is not scattered then there is a continuous surjection $g:K\to [0,1]$. Writing $\lambda$
for the Lebesgue measure on $[0,1]$ we consider the set 
$$M=\{\mu\in P(K): g[\mu]=\lambda\}.$$
 Then
$M$ is a compact and convex set which is $G_\delta$ in $P(K)$. Applying Theorem \ref{main} we get
the result.
\end{proof}

For $\kappa=\omega$ we get the following generalization of
a result due to Borodulin-Nadzieja \cite[Theorem 4.6]{PBN}, which was stated in the context
of Boolean algebras or zero-dimensional spaces.

\begin{col}\label{Nadzieja}
Every compact nonscattered space  either carries a nonatomic stron\-gly countably determined measure
or a Radon measure of uncountable type.
\end{col}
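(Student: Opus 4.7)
The plan is to derive this corollary as an immediate specialization of the preceding one to $\kappa=\omega_1$. Since $\omega_1$ has uncountable cofinality and $K$ is assumed nonscattered, the hypothesis of that corollary is satisfied, and it furnishes a nonatomic measure $\mu\in P(K)$ satisfying one of its two alternatives.

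In the first alternative one has $\ur(\mu)<\omega_1$. Here I would appeal to the characterization recorded right after Definition \ref{def}: $\ur(\mu)$ equals the minimal cardinality of an \emph{infinite} family $\ZZ$ of closed $G_\delta$ sets that approximate open sets in measure. Consequently $\ur(\mu)\ge\omega$ always holds, so $\ur(\mu)<\omega_1$ collapses to $\ur(\mu)=\omega$. By Definition \ref{def} this means precisely that $\mu$ is strongly countably determined, so, combined with the nonatomicity granted by the preceding corollary, it delivers the first clause of the present statement.

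In the second alternative one has $\mt(\mu)\ge\omega_1$, which by definition of Maharam type says that $L_1(\mu)$ has density character at least $\omega_1$, i.e.\ that $\mu$ is a Radon measure of uncountable type; this is the second clause.

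All the substantive work has already been done, in Theorem \ref{main} and in the passage to nonscattered spaces via Lemma \ref{lemma1} (through the preceding corollary obtained by pulling back the Lebesgue measure along a continuous surjection $K\to[0,1]$). The present step is only a bookkeeping translation of the quantitative preceding corollary at $\kappa=\omega_1$ into the qualitative dichotomy stated here, so no genuine obstacle arises; the only point deserving comment is the collapse $\ur(\mu)<\omega_1\Rightarrow\ur(\mu)=\omega$, which hinges on $\ur$ being defined via infinite families.
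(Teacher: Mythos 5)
Your proof is correct and is exactly the paper's intended derivation: the corollary is stated as the specialization of the preceding one to $\kappa=\omega_1$ (the paper's phrase ``for $\kappa=\omega$'' is a slip, since $\kappa$ must have uncountable cofinality, and you silently and correctly repair it). Your observation that $\ur(\mu)<\omega_1$ collapses to $\ur(\mu)=\omega$, hence to strong countable determinacy, is the only translation step needed and you handle it properly.
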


Of course a compact space can carry both strongly countably determined measures and measures of uncountable
type. Note that infinite products such as $K=2^\kappa$, $\kappa>\omega$,
are examples of spaces not admitting
strongly countably determined measures. Other examples of $K$ with such a property are mentioned below in connection with the Grothendieck property.

There are several classes of spaces carrying only measures of countable type, which necessarily admit
strongly countably determined measures; this list of such classes includes

\begin{enumerate}
\item Eberlein compacta (i.e.\ spaces homeomorphic to weakly compact subsets of Banach spaces);
\item ordered compact spaces;
\item Corson compacta under $\MA(\omega_1)$;
\item consistently, all first-countable compact spaces, see Plebanek \cite{Pl00}
\item Rosenthal compacta, see Todor\v{c}evi\'c \cite{To99}, cf.\ Marciszewski \& Plebanek \cite{MP11}.
\end{enumerate}

The reader may consult Mercourakis \cite{Me96} for basic facts and further references concerning
(1)--(3).
In connection with (4) note that it is relatively consistent that
every Radon measure on a first-countable compactum
is strongly countably determined (\cite{Pl00}) but there is an open problem posed by D.H.\ Fremlin if this
is a consequence of $\MA(\omega_1)$.

\section{Mappings onto cubes}

In \cite{F} Fremlin proved assuming $\MA(\omega_1)$ that if a compact space $K$
carries a Radon measure of type $\ge\omega_1$ , then there is a continuous surjection
$f:K\rightarrow [0,1]^{\omega_1}$; cf.\ Plebanek \cite{Pl00} for further discussion.
As we have mentioned uncountable products are typical examples of spaces on which there
are no strongly countably determined measures;
combining Fremlin's result  with Theorem \ref{main} we get the following partial converse.

\begin{col}
Assuming $\MA(\omega_1)$, if a compact space $K$ admits no strongly countably determined measure
then there is a continuous surjection $f:K\rightarrow [0,1]^{\omega_1}$.
\end{col}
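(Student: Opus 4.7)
The plan is to assemble the conclusion directly from three ingredients already in hand: Theorem \ref{main} applied to $M = P(K)$ with $\kappa = \omega_1$, the equality $\chi(\mu, P(K)) = \ur(\mu)$ established just above, and the cited theorem of Fremlin which, under $\MA(\omega_1)$, turns a Radon measure of Maharam type $\geq \omega_1$ into a continuous surjection onto $[0,1]^{\omega_1}$.

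First I would set $M = P(K)$, which is compact, convex and nonempty, and take $\kappa = \omega_1$, which has uncountable cofinality so that Theorem \ref{main} is applicable. The dichotomy then yields one of two measures in $P(K)$: either (D1) some $\mu \in P(K)$ with $\chi(\mu, P(K)) < \omega_1$, i.e.\ $\chi(\mu, P(K)) = \omega$; or (D2) some $\mu \in P(K)$ with $\mt(\mu) \geq \omega_1$.

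Next I would rule out (D1) using the hypothesis. By the equality $\chi(\mu, P(K)) = \ur(\mu)$ proved in the previous theorem, a measure witnessing (D1) would satisfy $\ur(\mu) = \omega$, which means (by Definition \ref{def} applied with $\kappa = \omega$) that $\mu$ is strongly countably determined. This contradicts the assumption that $K$ carries no such measure. Therefore (D2) holds, giving some $\mu \in P(K)$ with $\mt(\mu) \geq \omega_1$.

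Finally, I would invoke Fremlin's result from \cite{F}, which under $\MA(\omega_1)$ provides, for any compact $K$ carrying a Radon measure of Maharam type $\geq \omega_1$, a continuous surjection $K \rightarrow [0,1]^{\omega_1}$. Applied to the measure $\mu$ produced by (D2), this yields exactly the conclusion. The ``hard part'' is essentially cosmetic: one only needs to verify the hypotheses of Theorem \ref{main} (uncountable cofinality of $\omega_1$, compact convex nonempty $M$) and correctly translate ``no strongly countably determined measure on $K$'' into the negation of (D1) via the $\chi = \ur$ equality; the real content has already been carried by the earlier theorems and by Fremlin's theorem, which is cited and used as a black box.
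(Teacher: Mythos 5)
Your proof is correct and is exactly the argument the paper intends (the paper only says ``combining Fremlin's result with Theorem \ref{main}'' without writing out details): apply the dichotomy to $M=P(K)$ with $\kappa=\omega_1$, translate the failure of (D1) via $\chi(\mu,P(K))=\ur(\mu)$ into the nonexistence of a strongly countably determined measure, and feed the measure from (D2) into Fremlin's theorem.
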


If there is a continuous surjection from a compact space $K$ onto $[0,1]^\kappa$ for some $\kappa$ then it
can be extended to a continuous mapping $P(K)\to [0,1]^\kappa$. The existence of the latter surjection is
also closely related to types of measures on $K$; the following is due to Talagrand \cite{Ta81}
and requires no additional set-theoretic assumptions.

\begin{thrm}
If $\kappa\ge\omega_2$ is a cardinal of uncountable cofinality and $K$ is a compact space then there
is a continuous surjection $P(K)\to [0,1]^\kappa$ if (and only if) $K$ carries  a Radon measure
of type $\ge\kappa$.
\end{thrm}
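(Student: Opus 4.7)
For the ``only if'' direction, start from a continuous surjection $\phi:P(K)\to[0,1]^\kappa$ and let $\lambda$ denote product Lebesgue measure on $[0,1]^\kappa$, which has Maharam type $\kappa$. I would lift $\lambda$ to a Radon measure $\tilde\rho\in P(P(K))$ with $\phi_{\ast}\tilde\rho=\lambda$ via the extension technique of Lemma~\ref{lemma1} (applied with $\phi$ in place of $g$), and then form the barycenter
\[\mu=\int_{P(K)}\nu\dx\tilde\rho(\nu)\in P(K).\]
To verify $\mt(\mu)\ge\kappa$, consider the evaluation $T:C(K)\to L_2(\tilde\rho)$, $Tf(\nu)=\nu(f)$. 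Cauchy--Schwarz applied pointwise in $\nu$ yields $\|Tf-Tg\|_{L_2(\tilde\rho)}\le\|f-g\|_{L_2(\mu)}$, so the density of $T(C(K))$ in $L_2(\tilde\rho)$ is bounded by $\mt(\mu)$. Since $T(C(K))$ separates points of $P(K)$, Stone--Weierstrass makes the polynomial algebra it generates uniformly dense in $C(P(K))$, hence $L_2(\tilde\rho)$-dense, with the same density as $T(C(K))$ itself. Thus $\mt(\tilde\rho)\le\mt(\mu)$; since $\mt(\tilde\rho)\ge\mt(\phi_\ast\tilde\rho)=\mt(\lambda)=\kappa$, we obtain $\mt(\mu)\ge\kappa$.

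For the substantive ``if'' direction, fix $\mu\in P(K)$ with $\mt(\mu)\ge\kappa$. Maharam's structure theorem together with the uncountable cofinality of $\kappa$ let me pass to a restriction of $\mu$ whose measure algebra is homogeneous of type $\ge\kappa$, which therefore carries a $\mu$-stochastically independent family $(B_\xi)_{\xi<\kappa}\subseteq Bor(K)$ with $\mu(B_\xi)=1/2$. Inner/outer regularity of $\mu$ and Urysohn's lemma produce continuous approximants $f_\xi\in C(K,[0,1])$ of $1_{B_\xi}$. The candidate is the affine map $\Phi(\nu)=(\nu(f_\xi))_{\xi<\kappa}:P(K)\to[0,1]^\kappa$; compactness of $P(K)$ and convexity of its image reduce surjectivity to verifying that, for every finite $A\subseteq\kappa$ and every $s\in\{0,1\}^A$, the set
\[C_s^A=\bigcap_{\xi\in A,\,s_\xi=1}\{f_\xi=1\}\;\cap\;\bigcap_{\xi\in A,\,s_\xi=0}\{f_\xi=0\}\subseteq K\]
is nonempty.

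The hypothesis $\kappa\ge\omega_2$ enters precisely to establish this simultaneous topological ``Boolean independence'' of $(f_\xi)$. A naive continuous approximation of $1_{B_\xi}$ with fixed errors accumulates discrepancies over finite Boolean combinations of unbounded size, so the independence fails. Talagrand's device is to refine the family by a pigeonhole/$\Delta$-system argument on $[\kappa]^{<\omega}$: passing to a still $\kappa$-sized subfamily on which the approximation discrepancies are coherent, one secures $C_s^A\neq\emptyset$ for all finite $A\subseteq\kappa$ and all $s\in\{0,1\}^A$ simultaneously. The extra cardinal room between $\omega_1$ and $\omega_2$ is exactly what enables this refinement in ZFC; at $\kappa=\omega_1$ the analogue demands additional axioms, such as $\MA(\omega_1)$ in Fremlin's theorem cited above. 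Carrying out this combinatorial refinement is the main obstacle of the proof.
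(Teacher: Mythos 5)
The paper does not actually prove this theorem --- it is quoted from Talagrand \cite{Ta81} --- so there is no in-paper argument to compare yours against; I can only assess the proposal on its own terms. Your ``only if'' direction is correct and essentially complete: lifting $\lambda$ along $\phi$, taking the barycenter $\mu$, and using the contraction $T:C(K)\to L_2(\tilde\rho)$ together with Stone--Weierstrass to conclude $\mt(\tilde\rho)\le\mt(\mu)$, while $\mt(\tilde\rho)\ge\mt(\phi_*\tilde\rho)=\kappa$, is a clean argument whose ingredients (existence of the lifting and of the barycenter, monotonicity of Maharam type under images) are all standard. Your reduction of surjectivity of $\Phi$ to nonemptiness of the sets $C_s^A$ is also sound: Dirac measures at points of $C_s^A$ put all of $\{0,1\}^A$ into the $A$-th projection of the convex compact image, hence all of $[0,1]^A$, and a compact subset of a product all of whose finite subprojections are onto is the whole product.

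The genuine gap is exactly where you place it, and conceding it does not close it: the entire content of Talagrand's theorem is the construction of a $\kappa$-sized family of continuous functions that is Boolean independent in the Rosenthal sense, and you give no argument for this. Your diagnosis of the obstruction is right (a fixed per-index approximation error $\varepsilon$ is swamped by $2^{-n}$ once finite Boolean combinations of unbounded size $n$ are considered), but ``a pigeonhole/$\Delta$-system argument on $[\kappa]^{<\omega}$'' is not a proof and, as stated, does not even engage that obstruction: thinning a family of finite sets to a $\Delta$-system does nothing to make the errors coherent across combinations of all finite sizes simultaneously. The place where $\kappa\ge\omega_2$ really enters in \cite{Ta81} is a stabilization/free-set argument over countable sets of coordinates --- the kind of statement that fails or becomes independent at $\omega_1$, which is precisely why Fremlin's theorem at $\omega_1$ needs $\MA(\omega_1)$ --- and none of that is reconstructed here. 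Two smaller points: you should work with level sets $\{f_\xi\le a\}$ and $\{f_\xi\ge b\}$ for suitable $a<b$ and rescale, rather than demanding the exact values $0$ and $1$, since positivity of $\mu$ on the relevant intersections is how nonemptiness is certified; and you should state explicitly that the uncountable cofinality of $\kappa$ is what extracts a single Maharam-homogeneous component of type $\ge\kappa$ from the countable Maharam decomposition.
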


The above result and Theorem \ref{main} yield immediately the following.

\begin{col}\label{4:3}
If $\kappa\ge\omega_2$ is a cardinal of uncountable cofinality then for a compact space $K$ either
\begin{itemize}
\item[(i)] $P(K)$ has points of character $<\kappa$, or
\item[(ii)] $P(K)$ can be continuously mapped onto $[0,1]^\kappa$.
\end{itemize}
\end{col}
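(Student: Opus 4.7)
The plan is to apply Theorem~\ref{main} directly to the space $M = P(K)$ itself, which is of course a nonempty compact convex subset of $P(K)$. Since $\kappa$ has uncountable cofinality, the dichotomy produces one of two alternatives for $M = P(K)$.

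In the first alternative (D1), we obtain a measure $\mu \in P(K)$ with $\chi(\mu, P(K)) < \kappa$, which is exactly conclusion (i) of the corollary. In the second alternative (D2), we obtain a measure $\mu \in P(K)$ with $\mt(\mu) \geq \kappa$. At this point we invoke Talagrand's theorem (the result stated immediately before the corollary): since $\kappa \geq \omega_2$ is of uncountable cofinality and $K$ carries a Radon measure of type $\geq \kappa$, there is a continuous surjection $P(K) \to [0,1]^\kappa$, which is conclusion (ii).

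The hypothesis $\kappa \geq \omega_2$ enters only through Talagrand's theorem; the dichotomy itself is valid for every cardinal of uncountable cofinality, which is why the corollary's restriction matches precisely the range of Talagrand's result. No real obstacle arises, and no further construction is needed — the statement is essentially the conjunction of Theorem~\ref{main} applied to $M = P(K)$ with Talagrand's surjection theorem.
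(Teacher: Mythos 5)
Your proposal is correct and matches the paper's argument exactly: the corollary is stated there as an immediate consequence of Theorem~\ref{main} applied to $M=P(K)$ together with Talagrand's theorem, with $\kappa\ge\omega_2$ needed only for the latter. Nothing further is required.
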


The reader is referred to a survey paper \cite{Pl02} for further discussion. We shall only mention here
 a problem related to Corollary \ref{4:3}. Let us recall that the tightness of a topological space $X$ 
is the minimal cardinal number $\tau(X)$ such that whenever 
$A\subseteq X$ and $x\in\overline{A}$ then
there is $I\subseteq A$ such that $|I|\le\tau(X)$ and  $x\in\overline{I}$.
 
If $K$ is such a compact space that $P(K)$ has  tightness $\le\omega_1$ then $P(K)$ cannot be continuously
mapped onto $[0,1]^{\omega_2}$ because the tightness of the latter space is $\omega_2$ and tighness
is not increased by continuous surjections between compact spaces; hence
Corollary \ref{4:3} implies in particular that  then every measure $\mu\in P(K)$ has the Maharam type at most
$\omega_1$. We do not know if the following holds true.

\begin{question}
Assume that $K$ is such a compact space that $P(K)$ has  
countable tightness.  Does this imply that every measure $\mu\in P(K)$ has countable Maharam type?
\end{question}

It seems that the only result in that direction, requiring no additional set-theoretic assumptions,
is a theorem stating that if $K$ is Rosenthal compact then $K$ admits only measures of countable type;
the result was obtained by Jean Bourgain but the first published proof is due to 
Todor\v{c}evi\'c \cite{To99}; see also \cite{MP11}.

\section{On  Haydon-Levy-Odell result}

If $(v_n)_n$ is a sequence in some vector space then a sequence $(w_n)_n$ is said to be a
{\em convex block subsequence} of $(v_n)_n$ if for some sequence of natural numbers $k_1<k_2<\ldots$,
every $w_n$ is a convex combination of vectors $v_i$ for $k_n\le i < k_{n+1}$. 
The following result was proved by  Haydon, Levy and Odell (see \cite{HLO}, Corollary 3C).

\begin{thrm}\label{HLO}
If $K$ is compact and in $P(K)$ there is a sequence $(\lambda_n)$, with no weak* convergent convex block subsequence, then there is $\mu\in P(K)$ such that $\mt(\mu)\ge\mathfrak{p}$.
\end{thrm}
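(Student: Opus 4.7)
My plan is to derive Theorem~\ref{HLO} from the dichotomy of Theorem~\ref{main} applied to a suitable compact convex set canonically associated with $(\lambda_n)$.

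Set $M = \bigcap_{N \in \omega} \overline{\conv}\{\lambda_n : n \geq N\}$, where the closure is taken in the weak* topology of $P(K)$. As a decreasing intersection of nonempty weak* closed convex subsets of the compact space $P(K)$, the set $M$ is nonempty, compact and convex, and every $\mu \in M$ lies in the weak* closure of $\conv\{\lambda_n : n \geq N\}$ for every $N$, so $\mu$ is approximable by convex combinations of arbitrarily late terms of the sequence. Since $\mathfrak{p}$ has uncountable cofinality, Theorem~\ref{main} applies with $\kappa = \mathfrak{p}$: alternative (D2) yields the desired $\mu \in P(K)$ with $\mt(\mu) \geq \mathfrak{p}$, so we only need to rule out (D1) under the hypothesis that $(\lambda_n)$ has no weak* convergent convex block subsequence.

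Suppose then that some $\mu \in M$ has $\chi(\mu, M) = \kappa < \mathfrak{p}$, with local base $\{U_\alpha : \alpha < \kappa\}$ at $\mu$ in $M$. I would build a convex block subsequence of $(\lambda_n)$ weak* converging to $\mu$, contradicting the hypothesis. Since $\kappa < \mathfrak{p}$, Bell's theorem gives $\MA_\kappa(\sigma\text{-centered})$. Consider the poset $\mathbb{P}$ whose conditions are pairs $(p, F)$, where $p$ is a finite initial segment of a convex block sequence of $(\lambda_n)$ with rational weights and $F \in [\kappa]^{<\omega}$, ordered by $(q, G) \leq (p, F)$ iff $q$ end-extends $p$, $F \subseteq G$, and every block added in $q$ beyond $p$ has its convex combination inside $\bigcap_{\alpha \in F} U_\alpha$. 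Conditions sharing the same $p$ are compatible via the union of their $F$-parts, so $\mathbb{P}$ is $\sigma$-centered. The sets $D_\alpha = \{(p, F) : \alpha \in F\}$, for $\alpha < \kappa$, are trivially dense, and the sets $E_m = \{(p, F) : |p| \geq m\}$, for $m \in \omega$, are dense because $\bigcap_{\alpha \in F} U_\alpha$ is a weak* neighbourhood of $\mu$, which meets $\conv\{\lambda_n : n \geq N\}$ for every $N$ (and rational weights suffice by continuity). A filter meeting all $\kappa + \omega < \mathfrak{p}$ such sets produces a convex block subsequence $(\mu_j)$ with $\mu_j \in U_\alpha$ eventually, for every $\alpha$; hence $\mu_j \to \mu$.

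The main obstacle is ensuring \emph{eventual} rather than merely cofinal membership of $(\mu_j)$ in each $U_\alpha$: this is what forces conditions to carry the finite side-promise $F$ constraining \emph{all} subsequent blocks, rather than just recording a finite block history. Once the poset is designed this way, verifying $\sigma$-centeredness and density of the $E_m$'s is the essential technical step, the latter using crucially that $\mu$ lies in $\overline{\conv}\{\lambda_n : n \geq N\}$ for every $N$.
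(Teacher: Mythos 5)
Your reduction is exactly the paper's: the same set $M=\bigcap_N\overline{\conv}\{\lambda_k:k\ge N\}$, the same application of Theorem~\ref{main} with $\kappa=\mathfrak{p}$, and the same goal of turning $\chi(\mu,M)<\mathfrak{p}$ into a convex block subsequence weak* converging to $\mu$. Where you diverge is in how that last step is carried out. The paper does it with an elementary pseudo-intersection argument (Lemma~\ref{lemma:p}): since each $M_N$ is separable, one fixes a countable $D$ meeting each $M_N$ densely, forms the family $\{U_\alpha\cap D\}\cup\{M_N\cap D\}$ of fewer than $\mathfrak{p}$ sets, and takes an infinite pseudo-intersection $A\subseteq D$; this uses only the definition of $\mathfrak{p}$ and then requires a small extraction step to turn the points of $A$ (eventually supported past any $N$) into genuine consecutive blocks. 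You instead invoke Bell's theorem $\mathfrak{p}=\mathfrak{m}(\sigma\text{-centered})$ and design a $\sigma$-centered poset whose conditions carry a finite side-condition $F$ promising that all future blocks land in $\bigcap_{\alpha\in F}U_\alpha$. Your poset is correctly set up: $\sigma$-centeredness, transitivity of the order, and density of the $D_\alpha$ and $E_m$ all check out (density of $E_m$ uses precisely that $\mu\in\overline{\conv}\{\lambda_n:n\ge N\}$ for every $N$, and incidentally avoids the nonisolatedness hypothesis the paper's lemma needs). The trade-off: you get the block structure and the ``eventually in each $U_\alpha$'' property directly from genericity, at the cost of using Bell's theorem, a substantially deeper fact than the combinatorial definition of $\mathfrak{p}$ that suffices for the paper's route. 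One small point you elide (as does the paper, at the same spot): ``$\mu_j\in U_\alpha$ eventually for every $\alpha$'' yields $\mu_j\to\mu$ only after noting that every accumulation point of $(\mu_j)$ lies in $M$ (because the blocks move to infinity, so $\mu_j\in M_N$ eventually for each $N$) and choosing the $U_\alpha$ open in $P(K)$ with $M\cap\bigcap_\alpha\overline{U_\alpha}=\{\mu\}$; with that remark the argument is complete.
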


Recall that the cardinal number $\mathfrak{p}$ mentioned here  is the largest cardinal having the property:
\medskip

\textit{If $\kappa<\mathfrak{p}$ and $(M_\alpha)_{\alpha<\kappa}$ is a family of subsets of $\omega$ with $\bigcap_{\alpha\in F} M_\alpha$
infinite for all finite $F\subseteq\kappa$, then there exists an infinite $M\subseteq\omega$ with $M\setminus M_\alpha$ finite for all
$\alpha<\kappa$.}
\medskip

Theorem \ref{HLO} has several interesting consequences, for instance it is one of the main ingredients
of the proof that assuming $\mathfrak{p}=\mathfrak{c}>\omega_1$, for every infinite compact space
$K$, the Banach space $C(K)$ has either $l_\infty$ or $c_0$ as a quotient, see \cite{HLO}, cf.\ Koszmider
\cite{Ko} for  more information.

We note here that one can easily derive Theorem \ref{HLO} from our dichotomy as follows.

\begin{proof} (of Theorem \ref{HLO}).
Set
$$M=\bigcap_{n=1}^\infty\overline{\conv}\{\lambda_k:k\ge n\}.$$
By Theorem \ref{main} applied to such a set $M$ and $\kappa=\mathfrak{p}$ it is sufficient to check
that $\chi(\mu,M)\ge\mathfrak{p}$ for every $\mu\in M$.
But if we suppose that $\chi(\mu,M)<\mathfrak{p}$ for some $\mu\in M$ then $\mu$ is a limit
of some convex block subsequence of $(\lambda_n)_n$, which can be derived from the definition
of $\mathfrak{p}$, as indicated in Lemma \ref{lemma:p} below.
\end{proof}

\begin{lemma}\label{lemma:p}
Let $(M_n)_n$ be a decreasing sequence of separable topological spaces and let
$M=\bigcap_{n=1}^\infty M_n$. If $x$ is a nonisolated point in $M$ and
$\chi(x,M)<\mathfrak{p}$, then there is a sequence $(x_n)_n$ in $M_1$, convergent to $x$ and such that for any $k\in\omega$, $x_n\in M_k$ for all but finitely many $n$.
\end{lemma}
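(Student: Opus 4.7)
The plan is to apply the defining property of $\mathfrak{p}$ to a carefully chosen family of fewer than $\mathfrak{p}$ subsets of $\omega$, built from a local base at $x$ in $M$ together with countable dense subsets of the spaces $M_k$, and then to read off the desired sequence from a pseudointersection of that family.

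First, fix a local base $\{U_\alpha:\alpha<\kappa\}$ at $x$ in $M$ with $\kappa=\chi(x,M)<\mathfrak{p}$, and replace it by its closure under finite intersections (which keeps the cardinality at $\kappa$, since $\kappa$ is infinite because $x$ is nonisolated). Using that $M$ inherits its topology from $M_1$, pick for each $\alpha<\kappa$ an open $V_\alpha\subseteq M_1$ with $V_\alpha\cap M=U_\alpha$, compatibly so that $V_{\alpha^*}\subseteq V_{\alpha_1}\cap\cdots\cap V_{\alpha_m}$ whenever $U_{\alpha^*}=U_{\alpha_1}\cap\cdots\cap U_{\alpha_m}$. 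Fix a countable dense $D_k\subseteq M_k$ for each $k$, set $D=\bigcup_k D_k\subseteq M_1$, enumerate $D=\{d_i:i\in\omega\}$, and for $\alpha<\kappa$, $k\in\omega$ define
\[T_\alpha=\{i\in\omega:d_i\in V_\alpha\},\qquad S_k=\{i\in\omega:d_i\in M_k\}.\]
The central verification is that the family $\{T_\alpha\}_{\alpha<\kappa}\cup\{S_k\}_{k\in\omega}$ has the strong finite intersection property: given finitely many indices $\alpha_1,\ldots,\alpha_m$ and $k_1,\ldots,k_r$, pick $\alpha^*$ refining the $U_{\alpha_j}$'s and let $k^*=\max_j k_j$. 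Then $V_{\alpha^*}\cap M_{k^*}$ is open in $M_{k^*}$ and contains the infinite set $U_{\alpha^*}$, which is infinite because $x$ is nonisolated in the (implicitly Hausdorff) space $M$; density of $D_{k^*}$ in $M_{k^*}$ then forces $D_{k^*}\cap V_{\alpha^*}$ to be infinite, producing infinitely many indices in the corresponding intersection.

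Since the family has size $\kappa+\omega=\kappa<\mathfrak{p}$, the defining property of $\mathfrak{p}$ yields an infinite $I\subseteq\omega$ that is almost contained in each $T_\alpha$ and each $S_k$. Enumerating $I=\{i_1<i_2<\cdots\}$ and setting $x_n=d_{i_n}$ produces a sequence in $M_1$ which is eventually in every $V_\alpha$ (giving convergence to $x$ along the chosen local base) and eventually in every $M_k$, as required. The main obstacle is securing the \emph{infinitude} (rather than mere nonemptiness) of the finite intersections, since the pseudointersection furnished by $\mathfrak{p}$ must itself be infinite; this is precisely where the hypothesis that $x$ is nonisolated in $M$ enters in an essential way, propagating through the containments $U_{\alpha^*}\subseteq V_{\alpha^*}\cap M_{k^*}$ and the density of $D_{k^*}$.
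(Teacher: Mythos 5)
Your proof is correct and follows essentially the same route as the paper's: a countable set $D$ meeting each $M_k$ densely, open sets in $M_1$ tracing a local base at $x$ in $M$, and an application of $\mathfrak{p}$ to the resulting family of traces on $D$. The only difference is that you spell out the verification of the strong finite intersection property (via nonisolatedness of $x$ and $T_1$-ness), which the paper leaves implicit.
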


\begin{proof}
Since each $M_n$ is separable, there is a countable set $D\subseteq M_1$, such that for any $n\in \omega$, $D\cap M_n$ is dense in $M_n$.
Let  $\{U_\alpha:\alpha<\chi(x,M)\}$ a family of open subsets of $M_1$
 such that $\{U_\alpha\cap M:\alpha<\chi(x,M)\}$ is a local base at $x\in M$.
Consider $$\CC=\{U_\alpha\cap D:\alpha<\chi(x,M)\}\cup\{M_n\cap D:n<\omega\}.$$
Since $\chi(x,M)<\mathfrak{p}$, there exists an infinite $A\subseteq D$ with $A\setminus C$ finite for any $C\in \CC$. Now elements of $A$ form a desired sequence converging to $x$.
\end{proof}

Recall that a Banach space $X$ is called a \textit{Grothendieck space} if every weak$^*$ convergent sequence in the dual space $X^\ast$ is also weakly convergent. 
Note that if $K$ is an infinite compact space and
the Banach space $C(K)$ is Grothendieck then for every  sequence $(\mu_n)_n$ from $C(K)^*$ which is 
$weak^*$ convergent to $\mu\in C(K)^*$   one has $\mu(B)=\lim_n\mu_n(B)$ for all $B\in Bor(K)$.

It follows from Theorem \ref{HLO} that
$\mt(\mu)\ge\mathfrak{p}$ for some $\mu\in P(K)$: indeed, if $(x_n)_n$ is a sequence of distinct
elements of $K$ then it is not difficult to check that a sequence of measures
$(\delta_{x_n})_n$ has no convex block subsequence which converges weakly. Hence, by the Grothendieck
property and Theorem \ref{HLO}, $K$ must carry a Radon measure of type $\ge\mathfrak{p}$; this fact
is a particular case of Haydon's result from \cite{H}.

\begin{fact}
If $K$ is a compact space without isolated points and $C(K)$ is a Grothendieck space
then
\begin{itemize}
\item[(a)] $\ur(\mu)\ge\omega_1$ for every $\mu\in P(K)$; and
\item[(b)] $\ur(\mu)\ge\mathfrak{p}$ for every $\mu$ which is supported by a separable subspace of $K$.
\end{itemize}
\end{fact}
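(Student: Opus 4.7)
The plan hinges on the identity $\chi(\mu,P(K))=\ur(\mu)$ already proved in this paper, together with two elementary consequences of $C(K)$ being Grothendieck on sequences in $C(K)^*$: (G1) a sequence $(\delta_{x_n})$ of distinct Dirac masses on $K$ has no weak$^*$ convergent convex block subsequence, as noted in the paragraph preceding the proposition; and (G2) no point $a\in K$ is the limit of a non-trivial sequence in $K$, for otherwise $\delta_{y_n}\to\delta_a$ weak$^*$ but $\delta_{y_n}(\{a\})=0\not\to 1$, violating the Grothendieck property.

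To prove (b), I would pick a separable $S\supseteq\mathrm{supp}(\mu)$, enumerate a countable dense set $\{d_n\}_n\subseteq S$ with pairwise distinct $d_n$, and set $M=\bigcap_n\overline{\conv}\{\delta_{d_k}:k\ge n\}$. By (G1) and the very argument that proves Theorem \ref{HLO} in this paper --- namely, applying Theorem \ref{main} with $\kappa=\mathfrak{p}$ together with Lemma \ref{lemma:p}, which would manufacture a weak$^*$ convergent convex block subsequence out of any $\nu\in M$ of character $<\mathfrak{p}$ --- every $\nu\in M$ satisfies $\chi(\nu,M)\ge\mathfrak{p}$. It then remains to verify $\mu\in M$, which holds provided $\{d_k:k\ge n\}$ stays dense in $S$ for every $n$; this is automatic when $\mathrm{supp}(\mu)$ has no isolated points. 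The conclusion $\ur(\mu)=\chi(\mu,P(K))\ge\chi(\mu,M)\ge\mathfrak{p}$ then follows.

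The delicate case is when $\mu$ has an isolated atom $a\in S$; then $\mu\notin M$ and one must argue directly that $\ur(\mu)\ge\ur(\delta_a)=\chi(a,K)\ge\mathfrak{p}$. Assuming $\chi(a,K)<\mathfrak{p}$ and applying the pseudo-intersection property of $\mathfrak{p}$ to the traces, on a countable set accumulating at $a$ in $K$, of a local base at $a$, one extracts a non-trivial sequence of points of $K$ converging to $a$, contradicting (G2). Part (a) then reduces to (b) via the observation that every strongly countably determined measure has separable support: if $\mu$ is strongly $\omega$-determined via $f:K\to[0,1]^\omega$, disintegration of $\mu$ over $\lambda=f[\mu]$ shows that the conditional measures $\mu_y$ are $\lambda$-almost surely Dirac, so $\mu=s[\lambda]$ for a Borel selector $s$ of $f$; the image $s[\mathrm{supp}(\lambda)]$ is an analytic subset of $K$ and hence separable, so (b) yields $\ur(\mu)\ge\mathfrak{p}>\omega$, contradicting the assumption $\ur(\mu)=\omega$. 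The main obstacles are handling the isolated-atom case in (b) cleanly and justifying the measurable-selection step in the reduction for (a).
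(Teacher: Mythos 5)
Your overall strategy is sound and, at bottom, rests on the same two pillars as the paper's proof: the identity $\ur(\mu)=\chi(\mu,P(K))$ and the extraction, via the pseudo-intersection property, of a weak$^*$ convergent sequence that the Grothendieck property forbids. Your route to (b) in the main case is genuinely different in packaging and rather elegant: instead of approximating $\mu$ by a single countable set of rational atomic combinations and invoking Lemma \ref{lemma:p} directly (which is what the paper does), you place $\mu$ inside $M=\bigcap_n\overline{\conv}\{\delta_{d_k}:k\ge n\}=P\bigl(\bigcap_n\overline{\{d_k:k\ge n\}}\bigr)$ and quote the proof of Theorem \ref{HLO} wholesale, using $\chi(\mu,M)\le\chi(\mu,P(K))=\ur(\mu)$. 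This buys you a cleaner reduction to an already-proved statement; the cost is the case split on whether $\mu\in M$, which is exactly the atom case the paper also has to treat separately. Your treatment of that case (an atom $a$ forces the witnessing family $\ZZ$ to contain a neighbourhood base of closed $G_\delta$'s at $a$, whence $\chi(a,K)\le\ur(\mu)$) is the paper's argument verbatim, and the final step --- producing a nontrivial sequence converging to $a$ from $\chi(a,K)<\mathfrak{p}$ --- is asserted in the paper with exactly the same lack of detail as in your sketch. Be aware, though, that this step is not a consequence of non-isolatedness and small character alone: the top point of $\omega_1+1$ has character $\omega_1$, which is $<\mathfrak{p}$ under $\MA(\omega_1)$, yet is the limit of no nontrivial sequence. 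One really does need a \emph{countable} set in $K\setminus\{a\}$ accumulating at $a$ before the traces of a (finite-intersection-closed) local base can be fed to $\mathfrak{p}$; when $a$ is isolated in the separable carrier $S$ this countable set has to be found elsewhere, and neither you nor the paper says where. You should at least isolate this as an explicit hypothesis or supply the missing countable set.

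The genuine gap that is yours alone is the reduction of (a) to (b). The claim you need --- a strongly countably determined measure has separable support --- is true, but your proof of it does not work as written: after disintegrating $\mu$ over $\lambda=f[\mu]$ and obtaining a Borel selector $s$, you declare $s[\mathrm{supp}(\lambda)]$ ``analytic and hence separable.'' In a non-metrizable compactum $K$ the notion of an analytic subset is not available in the form you want, and separability is simply not preserved by Borel (or arbitrary) images out of a Polish space into such a $K$; moreover the a.e.\ Dirac-ness of the conditional measures itself requires a measurable-selection argument you have not supplied. Fortunately the claim has a one-line proof from the characterization of $\ur$ given after Definition \ref{def}: if $\ZZ=\{Z_n\}$ is a countable family of closed $G_\delta$ sets witnessing $\ur(\mu)=\omega$, pick $z_n\in Z_n$ whenever $\mu(Z_n)>0$; every open $U$ meeting $\mathrm{supp}(\mu)$ has positive measure, hence contains some $Z_n$ of positive measure, hence some $z_n$, so $\{z_n\}$ is dense in $\mathrm{supp}(\mu)$. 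With that repair your reduction of (a) to (b) is clean and arguably tidier than the paper's ``simple modification,'' since for $\kappa=\omega$ the problematic sequence-extraction step above is trivial ($G_\delta$ points of compact spaces have countable local bases).
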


\begin{proof}
We shall discuss (b); one gets (a) by a simple modification of an argument below.
Suppose that $\kappa=\ur(\mu) < \mathfrak{p}$ for some $\mu\in P(K)$.

Let $\mu$ have an atom, say $c=\mu(\{x_0\})>0$ for $x_0\in K$. If $U$ is an open set containing $x_0$
then  there is an open set $V$ with $x_0\in V\sub U$ and $\mu(V\setminus\{x_0\})<c/2$. It follows that
if $\ZZ$ is a family of closed $G_\delta$ sets witnessing that $\ur(\mu)=\kappa$ then
there is $Z\in\ZZ$ such that $Z\sub V$ and $\nu(V\setminus Z)<c/2$ which implies $x_0\in Z\sub V$.
Using this remark we conclude that $\chi(x_0, K)\le \kappa$;
as $\kappa<\mathfrak{p}$ and $x_0$ is not isolated, there is a sequence $x_n \in K\setminus\{x_0\}$
converging to $x_0$, a contradiction with the Grothendieck property.

Let $\mu$ be nonatomic and $\mu(K_0)=1$ for some separable $K_0\sub K$; take
a countable dense subset $A$ of $K_0$.
Then $\mu$ is in the closure of a countable set
\[\{\sum_{i=1}^n r_i \delta_{a_i}: r_i\in\mathbb{Q}, a_i\in A\},\]
which in view of $\kappa=\ur(\mu)=\chi(\mu, P(K)$ implies that $\mu$ is a $weak^*$ limit
of a sequence of purely atomic measures; a contradiction since such a sequence cannot converge weakly.
\end{proof}

\end{document}